\numberwithin{equation}{section}
\newtheorem{thm}{Theorem}[section]
\newtheorem{conj}[thm]{Conjecture}
\newtheorem*{definition}{Definition}
\newcommand{\ol}{\overline}
\def \CC{\mathbb{C}}
\def \RR{\mathbb{R}}
\def \DD{\mathbb{D}}
\def \p{\partial}
\def \e{\varepsilon}
\def\disp{\displaystyle}
\begin{document}
\author{Alexandre Eremenko\thanks{Supported by NSF grant DMS-1067886. } \hspace{.01 in} and Erik Lundberg}
\title{Non-algebraic quadrature domains}
\maketitle
\begin{abstract}
It is well known that, \emph{in the plane}, the boundary of any quadrature domain (in the classical sense) coincides with the zero set of a polynomial.
We show, by explicitly constructing some four-dimensional examples, that this is not always the case.
This confirms, in dimension $4$, a conjecture of the second author.
Our method is based on the Schwarz potential and involves elliptic integrals of the third kind.
MSC 2010: 31B05, 30E20.

Keywords: quadrature domain, Schwarz potential.
\end{abstract}

\section{Introduction}

A bounded domain $\Omega \subset \CC$ is a \emph{quadrature domain} (in the classical sense) if it admits a formula expressing the area integral of 
any function $f$ analytic and integrable in $\Omega$ as a finite sum of weighted point evaluations of the function and its derivatives.
i.e. 
\begin{equation}\label{eq:QF1}
 \int_{\Omega} {f dA} = \sum_{m=1}^{N} \sum_{k=0}^{n_k}{a_{m,k}f^{(k)}(z_m)},
\end{equation}
where $z_m$ are distinct points in $\Omega$ and $a_{mk}$ are constants (possibly complex) independent of $f$.

{\bf Note:} This can be generalized in various directions.  One may use a different class of test functions, 
integrate with respect to a weighted density (or over the boundary), 
or allow for more general distributions in the right hand side.
``Quadrature domain in the classical sense'' is used to specify the restricted case we consider throughout this paper.

Suppose that $\Omega$ is a bounded simply-connected domain in the plane with non-singular analytic boundary.
Then the following are equivalent.
Moreover, there are simple formulas relating the details of each. \\

\noindent (i) $\Omega$ is a quadrature domain. \\
(ii) The Schwarz function of $\partial \Omega$ is meromorphic in $\Omega$. \\
(iii)  The conformal map from the disk to $\Omega$ is rational.\\

For their equivalence see \cite[Ch. 14]{Davis}.

In higher dimensions, quadrature domains are defined by replacing analytic functions with harmonic functions.
We write the quadrature formula using multi-index notation with $\alpha = (\alpha_1,\alpha_2,..,\alpha_n)$
consisting of nonnegative integers, $|\alpha| = \alpha_1 + \alpha_2+..+\alpha_n$, 
and $\partial_{\alpha} u = {\displaystyle \frac{ \partial^{|\alpha|} u}{ \partial_{x_1}^{\alpha_1}\partial_{x_2}^{\alpha_2}..\partial_{x_n}^{\alpha_n} }}$.
Then, $\Omega$ is a quadrature domain if it admits a quadrature formula for integration of harmonic functions $u$,
\begin{equation}\label{eq:QF2}
 \int_{\Omega} {u dV} = \sum_{m=1}^{N} \sum_{|\alpha|=0}^{n_k}{a_{m,\alpha} {\partial_{\alpha} u} ({\bf x_m})},
\end{equation}
where ${\bf x}_1, {\bf x}_2, .., {\bf x}_N$ are points in $\Omega$ and $a_{m,\alpha}$ are now \emph{real} constants.

{\it Remark:} In $\RR^2$, any quadrature domain for harmonic functions is a quadrature domain for analytic functions,
but not vice-versa.  
See \cite[Example 1]{Gust96} for an example of a quadrature domain for analytic functions that is \emph{not} a quadrature domain for harmonic functions.

For the case of $n \geq 3$ dimensions, condition (ii) has a counterpart formulated in terms of the 
\emph{Schwarz potential}, introduced by D. Khavinson and H. S. Shapiro (see Section~\ref{sec:SP}).
As a consequence of Liouville's Theorem on the rigidity of conformal maps \cite{DNF86}, condition (iii) does not extend to higher dimensions.
Throughout this paper, we will make use of the reformulation of quadrature domains in terms of the Schwarz potential discussed in Section \ref{sec:SP}.

Regarding the existence, in the case when there are no derivatives appearing in the quadrature formula (\ref{eq:QF2}), 
the free-boundary problem of obtaining a quadrature domain satisfying the prescribed quadrature formula can be reformulated as a so-called obstacle problem,
and the existence of a solution is proved using variational inequalities \cite{Gust81}.
The case when the quadrature formula (\ref{eq:QF2}) involves derivatives but is supported at only one point is 
especially relevant to the present paper and is discussed in Section 3.
In each of these cases, the existence theorems are true in any number of dimensions.

In the plane, quadrature domains can be constructed explicitly.
The only explicit examples in higher dimensions are a sphere in $\RR^n$ and some special examples in $\mathbb{R}^4$ discovered by L. Karp \cite{Karp92}.

The current investigation is motivated by the lack of explicit examples in the higher-dimensional case and the lack of qualitative understanding.
We address the question \emph{Are quadrature domains algebraic in $\RR^n$, with $n \geq 3$?} which was
raised by H. S. Shapiro \cite{Shapiro89} (p. 40) and posed again as an open problem by B. Gustafsson \cite{EGKP2005} (p. xv).

As usual, by ``algebraic domain'' we mean that the boundary of the domain is contained in the zero-set of a polynomial.

In the plane, under no regularity assumptions on $\p \Omega$, it was shown by D. Aharanov and H. S. Shapiro (1976) that
quadrature domains are always algebraic.  
B. Gustafsson (1983) showed that they have further nice properties \cite{Gust83}
which we summarize in the following.

\begin{thm}[B. Gustafsson, 1983]\label{thm:Gust}
If $\Omega \subset \RR^2$ is a quadrature domain, then $\Omega$ is algebraic.
Moreover, for some polynomial $P(x,y)$, 
the boundary $\partial \Omega$ consists of {\bf all} of the points of $\{P(x,y)=0\}$ except finitely many.
Moreover, the leading order (homogeneous) term in $P$ is a constant times $(X^2+Y^2)^M$ for some $M$.
Moreover, there are some explicit relations between the coefficients of $P$ and the coefficients $a_{m,k}$ in the quadrature identity.
\end{thm}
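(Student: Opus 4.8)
The plan is to realize the Schwarz function of $\partial\Omega$ as a meromorphic function on a compact Riemann surface---the Schottky double of $\Omega$---and then to read off all four assertions from the algebra of that surface. Applying \eqref{eq:QF1} to $f(\zeta)=(z-\zeta)^{-1}$ for $z\notin\overline\Omega$ shows that the Cauchy transform $C_\Omega(z)=\frac1\pi\int_\Omega(z-\zeta)^{-1}\,dA(\zeta)$ coincides on $\CC\setminus\overline\Omega$ with the rational function $R(z)=\frac1\pi\sum_{m,k}k!\,a_{m,k}(z-z_m)^{-k-1}$, while $\partial_{\bar z}\bigl(C_\Omega-\bar z\bigr)=0$ on $\Omega$ makes $H:=C_\Omega-\bar z$ holomorphic in $\Omega$; matching the two expressions across $\partial\Omega$ (using continuity of $C_\Omega$) exhibits the Schwarz function $S:=R-H$, which is meromorphic in $\Omega$, equals $\bar z$ on $\partial\Omega$, and has polar part at $z_m$ exactly $\frac1\pi\sum_k k!\,a_{m,k}(z-z_m)^{-k-1}$. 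Invoking the regularity theory for the obstacle problem attached to a quadrature domain (Sakai) to know that $\partial\Omega$ is real-analytic off finitely many points, $S$ continues meromorphically across $\partial\Omega$; gluing $\Omega$ to a mirror copy along $\partial\Omega$ produces a compact Riemann surface $\hat\Omega$ on which $z$ and $S$ extend to meromorphic functions $Z,W$, together with an anti-holomorphic involution $J$ with $\mathrm{Fix}(J)=\partial\Omega$, $Z\circ J=\overline{W}$ and $W\circ J=\overline{Z}$. (When $\Omega$ is simply connected with analytic boundary, $\hat\Omega$ is a sphere and this is just the rational conformal map $\phi\colon\DD\to\Omega$ of (iii), with $Z=\phi(\zeta)$, $W=\phi^*(1/\zeta)$ where $\phi^*$ has the conjugated coefficients; then no regularity theory is needed.)

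Since $Z$ and $W$ are meromorphic on the compact surface $\hat\Omega$, they satisfy an irreducible polynomial relation $P(Z,W)=0$. Evaluating it at $J(p)$ and using $Z\circ J=\overline W$, $W\circ J=\overline Z$ shows that $\overline{P(\overline w,\overline z)}$ is again such a relation, hence a scalar multiple of $P(z,w)$; rescaling $P$ so that $\overline{P(\overline w,\overline z)}=P(z,w)$, this says precisely that $\hat P(X,Y):=P(X+iY,X-iY)$ has real coefficients, and on $\partial\Omega=\mathrm{Fix}(J)$, where $W=\overline Z$, we get $\hat P=0$: so $\Omega$ is algebraic and $\partial\Omega\subseteq\{\hat P=0\}$. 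For the leading term, the points at infinity of the projective closure of $\{P=0\}$ are the images under $[Z{:}W{:}1]$ of the poles of $Z$ and $W$ on $\hat\Omega$. The poles of $W$ are the $z_m$ (at least one occurs, since the quadrature identity applied to $f\equiv1$ shows $R\not\equiv0$), and there $Z$ is finite, so they contribute only $[0{:}1{:}0]$; the poles of $Z$ lie on the mirror copy at the $J(z_m)$, where $W=\overline{Z\circ J}$ is finite, contributing only $[1{:}0{:}0]$; these pole-sets are disjoint. Hence the top-degree form of $P$ is a single monomial $c\,z^aw^b$ with $a,b\ge1$, and the reality relation $\overline{P(\overline w,\overline z)}=P(z,w)$ forces the monomial $z^bw^a$ to occur too, so $a=b=:M$; the leading term of $\hat P$ is then $c(zw)^M=c(X^2+Y^2)^M$ with $M\ge1$ and $c\in\RR$. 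The explicit relations with the $a_{m,k}$ come from unwinding the construction: the distinguished branch $w=S(z)$ of $P(z,w)=0$ is the Schwarz function, so the $z_m$ and $a_{m,k}$ are its poles and scaled polar coefficients---quantities determined by $P$, and via $R$ conversely determining the relevant part of $S$.

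The remaining precision---that $\{\hat P=0\}\setminus\partial\Omega$ is finite---rests on the normalization map $\nu\colon\hat\Omega\to\overline{\{P=0\}}\subseteq\mathbb{P}^2$ being birational, and this is where boundedness of $\Omega$ is used. In the simply connected case: if the parametrization $\zeta\mapsto(\phi(\zeta),\phi^*(1/\zeta))$ of $\{P=0\}$ were not generically injective, both $\phi(\zeta)$ and $\phi^*(1/\zeta)$ would factor through a common rational map $R$ of degree $\ge2$; a short argument---at a generic value of $R$ each fiber has at most one preimage in $\DD$ and at most one outside $\overline\DD$---then forces $\deg R=2$ with deck involution swapping $\DD$ and its exterior, so $R(\DD)$ is the complement of a circular arc and $\phi=\phi_1\circ R$ with $\phi_1$ injective on $R(\DD)$; but a rational map of degree $\ge2$ is never injective on the complement of a set with empty interior, so $\deg\phi_1=1$, and then $\Omega=\phi_1(R(\DD))$ is the complement of an arc, hence unbounded---a contradiction. (The general case is handled analogously on the double.) Granting birationality, a real point $(z_0,\overline{z_0})$ of $\{P=0\}$ that is not a singular point of the curve has a single $\nu$-preimage $p$; but $\nu(J(p))=(\overline{W(p)},\overline{Z(p)})=(z_0,\overline{z_0})=\nu(p)$ forces $J(p)=p$, so $p\in\mathrm{Fix}(J)=\partial\Omega$ and $z_0=Z(p)\in\partial\Omega$. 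Hence $\{\hat P=0\}\setminus\partial\Omega$ lies in the finite set of real singular points of $\{P=0\}$.

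I expect the genuine obstacle to lie in justifying the passage to the compact-surface picture: that a classical quadrature domain has real-analytic boundary away from finitely many singularities, and that its Schwarz function extends single-valued and meromorphic to the Schottky double, is the substantial analytic input and relies on regularity theory. On the algebraic side the one nonformal point is the birationality of $\nu$, which is exactly where the hypothesis that $\Omega$ is a bounded domain enters. Once $\hat\Omega$, $Z$, $W$, $J$ and this birationality are in hand, the four conclusions follow from the soft arguments above; in the simply connected analytic case, where (iii) supplies the rational conformal map at the outset, the whole argument is elementary.
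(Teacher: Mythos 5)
The paper states Theorem~\ref{thm:Gust} without proof, citing it to Gustafsson's 1983 paper \cite{Gust83}; your proposal is essentially a reconstruction of that cited argument, which likewise passes through the Schottky double, the pair of meromorphic functions $Z,W$ with an antiholomorphic involution $J$, the irreducible polynomial relation $P(Z,W)=0$, and the reality/pole analysis. The algebraic part is carried out correctly: the identity $\overline{P(\overline w,\overline z)}=P(z,w)$ giving real coefficients for $\hat P$, the disjointness of the pole divisors of $Z$ and $W$ on the two halves of the double forcing the top form to be a single monomial $cz^aw^b$, and the reality relation forcing $a=b=M$, are exactly right; and your use of $R-C_\Omega$ to exhibit the meromorphic Schwarz function from the quadrature identity is the standard starting point. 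Two caveats are worth flagging. First, the appeal to Sakai's regularity theory is anachronistic: that result (1991) postdates the theorem, and Gustafsson's proof instead constructs the double and the extension of $S$ without presupposing boundary regularity, which is part of what makes the theorem nontrivial (real-analyticity of $\partial\Omega$ off finitely many points comes out at the end, once one knows $\partial\Omega$ lies in an algebraic curve). So logically your route is sound but not self-contained in the way Gustafsson's is. Second, the birationality step---the one place boundedness genuinely enters---is argued carefully only for the simply connected case and waved through for multiply connected $\Omega$; on a double of positive genus the degree-count and deck-involution argument must be reworked, and this is where a serious portion of Gustafsson's paper lives. Neither caveat is a fatal gap, but both mark points where your sketch is relying on substantial machinery that the cited reference supplies directly.
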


By the remark above, this applies as well to quadrature domains for harmonic functions in the plane,
so it makes sense to ask if any part of the theorem extends to $\RR^n$.
Considering that the examples in $\RR^4$ constructed by L. Karp \cite{Karp92} were algebraic,
one might hope that the answer is ``yes''.
We show that in fact quadrature domains are not always algebraic, 
thus confirming the following Conjecture \cite{Lund11} in $n=4$ dimensions.

\begin{conj}\label{conj:quad}
    In all dimensions greater than two, there exist quadrature domains that are not algebraic.
\end{conj}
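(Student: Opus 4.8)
The plan is to construct an explicit quadrature domain in $\RR^4$ whose boundary cannot lie on any algebraic variety, by working through the Schwarz potential reformulation rather than attempting to analyze the quadrature data directly. The starting point is the observation, to be recalled in Section \ref{sec:SP}, that $\Omega$ is a quadrature domain precisely when its Schwarz potential $w$ — the solution near $\p\Omega$ of $\Delta w = 0$ (or the appropriate inhomogeneous analogue matching $|x|^2$) with $w$ and $\nabla w$ agreeing with those of the Newtonian potential of $\Omega$ on $\p\Omega$ — extends as a (multivalued) function with controlled singularities to all of $\Omega$. The key idea is to reverse-engineer: rather than pick $\Omega$ and compute $w$, I would prescribe a candidate Schwarz potential built from the Newtonian potential of a point mass (a one-point quadrature domain, which exists in all dimensions by the results mentioned in the paragraph on obstacle problems / \cite{Karp92}), but modify the geometry so that the resulting free boundary is forced to contain a transcendental arc.

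\medskip

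Concretely, I would exploit a symmetry reduction. In $\RR^4 \cong \CC^2$, a domain invariant under a suitable torus action reduces the PDE for the Schwarz potential to an ODE in two real variables, and one expects — this is where the title's \emph{elliptic integrals of the third kind} enter — that the relevant ODE has solutions expressible via incomplete elliptic integrals. The main steps would be: (1) set up the rotationally/torically symmetric ansatz and derive the reduced equation for the boundary curve (in the quotient); (2) identify the quadrature data (a single point with derivatives, as in Karp's family and the one-point existence theory of Section 3) for which the reduced Schwarz potential is governed by an elliptic integral $\Pi(n;\phi\,|\,m)$ of the third kind; (3) solve the free-boundary / matching conditions to pin down an actual bounded domain $\Omega$; and (4) prove non-algebraicity: show that if $\p\Omega$ lay on $\{Q = 0\}$ for a polynomial $Q$, then the reduced boundary curve would be algebraic, contradicting the transcendence of the elliptic-integral branch (e.g. because an algebraic curve has only finitely many branch points or because the monodromy/periods of the elliptic integral are incompatible with algebraicity). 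For step (4) the cleanest route is to show the Schwarz potential itself, as an analytic continuation, has a branch point of infinite order or a logarithmic/transcendental singularity that an algebraic boundary would preclude via Gustafsson's structure (Theorem \ref{thm:Gust}'s higher-dimensional obstructions, or the local analysis of the Schwarz potential near an algebraic boundary point).

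\medskip

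The main obstacle I anticipate is step (3) together with step (4): ensuring that the explicit solution really closes up into a \emph{bounded} domain with nonsingular boundary (so that it genuinely qualifies as a quadrature domain in the sense defined), and then rigorously ruling out \emph{every} polynomial $Q$. The first difficulty is that the elliptic-integral solution of the reduced ODE a priori only gives a local boundary arc; one must check the periods/real-analytic continuation produce a smooth closed hypersurface bounding a region containing the quadrature node, which will require a careful choice of parameters (modulus $m$ and characteristic $n$ of $\Pi$) and possibly a numerical/qualitative phase-plane argument. The second difficulty is conceptual: algebraicity of $\p\Omega$ is a statement about a codimension-one set, not the curve in the quotient, so I must argue that an algebraic $\p\Omega$ forces algebraicity of the reduced curve (using the symmetry to descend $Q$ to an invariant polynomial), and then invoke the transcendence of the elliptic integral — citing that the inverse/branch structure of $\Pi(n;\,\cdot\,|\,m)$ is not an algebraic function, which is classical. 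If a direct transcendence argument on the curve is awkward, the fallback is to locate a single boundary point where the Schwarz potential's analytic continuation has a non-algebraic singularity (a branch of infinite order coming from the elliptic periods), since near an algebraic boundary the Schwarz potential must be algebraic as well; this localizes the contradiction and is, I expect, the argument the authors use.
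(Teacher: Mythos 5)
Your high-level plan shares the paper's ingredients --- exploit a symmetry reduction in $\RR^4$, produce a boundary described by elliptic integrals of the third kind, argue non-algebraicity via transcendence --- but several of the specific steps you envision either differ substantially from what the paper does or contain real gaps.

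First, the symmetry. You propose a torus action on $\RR^4 \cong \CC^2$ reducing the Schwarz-potential PDE to an ODE. The paper instead uses \emph{axial} symmetry (rotation about one coordinate axis), which reduces the 4D problem to a genuine 2D PDE, not an ODE. The crucial enabling fact --- which your plan does not have --- is Karp's observation that in $\RR^4$ the reduced equation $\Delta U + \frac{2U_y}{y}=0$ is equivalent to $y\,U(x,y)$ being harmonic in two variables. This is what lets all of two-dimensional Schwarz-function and conformal-map machinery come to bear; without it the reduction does not buy you the ODE you are hoping for.

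Second, and more centrally, your plan lacks the functional-equation device that organizes the whole proof. The paper constructs a conformal map $f:\DD\to D_p$ with real coefficients satisfying
\[
\left(f(z) - f(1/z)\right)^2 = g(z),
\]
where $g$ is \emph{rational} with a single pole, of order 3, at $z=0$ in $\overline{\DD}$. Combining Karp's trick with the Cauchy data gives $V_\zeta = \tfrac{i}{4}(\zeta - S(\zeta))^2$, and the functional equation then shows $V_\zeta$ has a single pole of order $3$ at the origin --- which is exactly the signature of the quadrature formula $a_0 u(0) + a_1\,\partial_{x_1} u(0)$ you want. Your ``reverse-engineering'' framing is vague by comparison: you say you would prescribe singularities and solve a free-boundary problem, but the specific algebraic relation between $f$, $S$, and $g$ is what makes the quadrature-domain property checkable and, simultaneously, is what drives the non-algebraicity argument.

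Third, the non-algebraicity argument. Your fallback --- find a boundary point where the Schwarz potential has a transcendental singularity --- is close but not careful enough, and the paper explicitly flags the pitfall you are skirting: a domain can be the image of $\DD$ under a transcendental map and still have algebraic boundary (any non-circular ellipse). So infinite-sheetedness of $f$ alone proves nothing. The actual argument uses the functional equation: if $\gamma = \partial D_p$ were algebraic, then $S(\zeta)$ would be algebraic, and then $f^{-1}(\zeta) = g^{-1}\left((\zeta-S(\zeta))^2\right)$ would be algebraic since $g$ is rational --- contradicting the infinitely-sheeted Riemann surface of $f$ (Claim 2, proved by a Sokhotski--Plemelj monodromy computation, not by the transcendence of $\Pi$ as a special function). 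Finally, your appeal to ``Gustafsson's structure (Theorem~\ref{thm:Gust}'s higher-dimensional obstructions)'' is a misstep: Theorem~\ref{thm:Gust} is strictly two-dimensional, and the absence of any such higher-dimensional structure theorem is precisely the open question the paper resolves negatively.
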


Our answer is based on constructing explicit examples in Section \ref{sec:QD}.
In Section \ref{sec:LG} we use these same examples to generate some exact solutions to the Laplacian growth problem.

First, we review the definition of the Schwarz Potential in the next section.

\section{The Schwarz Potential}\label{sec:SP}

Suppose that $\Gamma$ is a non-singular, real-analytic curve in the plane.  
Then the Schwarz function $S(\zeta)$ is the function that is complex-analytic in a neighborhood of $\Gamma$ and coincides with $\bar{\zeta}$ on $\Gamma$ (see \cite{Davis} 
for a full exposition).  
If $\Gamma$ is given algebraically as the zero set of a polynomial $P(x,y)$, by the implicit function Theorem
we can obtain $S(\zeta)$ by making the complex-linear change of variables $\zeta=x+iy$, $\bar{\zeta}=x-iy$, 
and then solving for $\bar{\zeta}$ in the equation ${\displaystyle P \left( \frac{\zeta+\bar{\zeta}}{2},\frac{\zeta-\bar{\zeta}}{2i} \right)=0}$ (See \cite[p. 3]{Shapiro89}).
On the other hand, $S(\zeta)$ can also be written in terms of the conformal map $f$ from the unit disk to the domain, using the formula
\begin{equation}\label{formula}
 S(\zeta) = f^* \left( \frac{1}{f^{-1}(\zeta)}\right),
\end{equation}
where $f^*$ denotes the function obtained by conjugating the coefficients of $f$.

In the next section, we will utilize both such representations of $S(\zeta)$, so let us illustrate each by an example.

{\bf Example} (``C. Neumann's oval''): Let $a>0$ be a real parameter.  
Suppose that $\Gamma$ is the Jordan curve defined by 
$$(x^2+y^2)^2=a^2(x^2+y^2)+4 x^2.$$ 
See Figure (\ref{fig:Neumann}), for a plot of $\Gamma$ for different values of $a>0$.

\begin{figure}[ht]
    \begin{center}
    \includegraphics[scale=.15]{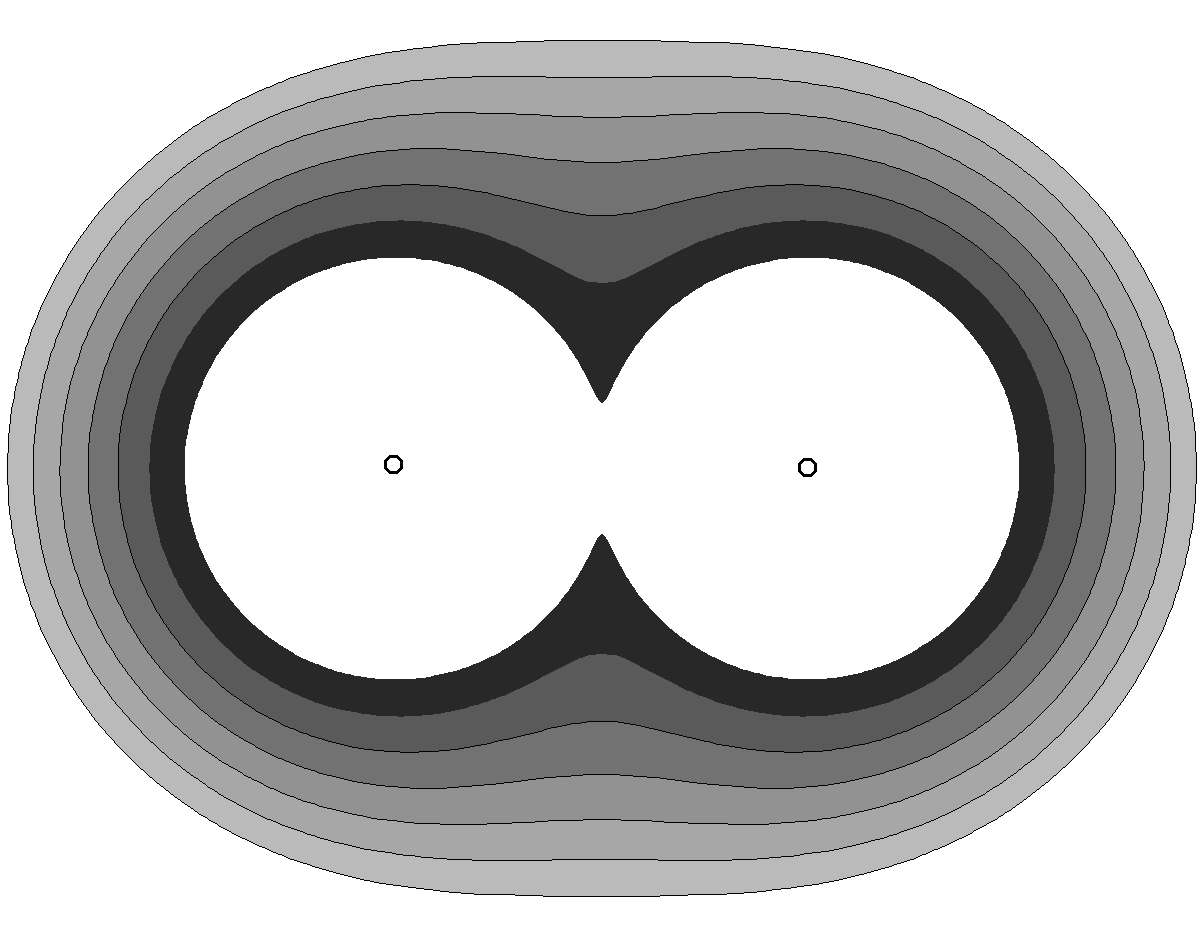}
    \end{center}
    \caption{A plot of $\Gamma$ for different values of $a>0$.}
    \label{fig:Neumann}
\end{figure}

Changing the variables we have $(\zeta \bar{\zeta})^2=a^2(\zeta \bar{\zeta})+ (\zeta+\bar{\zeta})^2$.
Solving for $\bar{\zeta}$ gives 
$$S(\zeta)=\frac{\zeta(a^2+2)+2\zeta\sqrt{a^4+a^2+\zeta^2}}{2(\zeta^2-1)},$$
indicating that $\Gamma$ is the boundary of a quadrature domain since $S(\zeta)$ is meromorphic in the interior with only poles at $\zeta = \pm 1$.

Let us make the same observation using the formula (\ref{formula}).
The conformal map from $\DD$ to the interior of $\Gamma$ is given by
$$f(z)=\frac{(R^4-1) z}{R(R^2 - z^2)},$$ 
where ${\displaystyle R = \frac{a+\sqrt{a^2+4}}{2}> 1}$.
Without actually calculating the inverse of $f$, but just observing that it is a conformal map from the interior of $\Gamma$ to $\DD$,
we can see that ${\displaystyle S(\zeta) = f^* \left( \frac{1}{f^{-1}(\zeta)}\right)}$ is 
meromorphic in the interior of $\Gamma$ and has two simple poles at $f(\pm 1/R) = \pm 1$.

Suppose that $\Gamma$ is more generally a nonsingular analytic \emph{hypersurface} in $\mathbb{R}^n$, 
and consider the following Cauchy problem posed near $\Gamma$.
The solution exists in some neighborhood of $\Gamma$ and is unique by the Cauchy-Kovalevskaya Theorem.
\begin{equation}\label{CP}
\left\{
\begin{array}{l}
\Delta w = 0 $ near $ \Gamma \\
w|_{\Gamma} =\frac{1}{2}||{\bf x}||^2 \\
\nabla w|_{\Gamma}={\bf x}
\end{array}\right.
\end{equation}

\begin{definition} 
The solution $w({\bf x})$ of Cauchy problem \ref{CP} is called the Schwarz Potential of $\Gamma$.  
\end{definition}

In $\mathbb{R}^2$, the Schwarz function can be directly recovered from the Schwarz potential.
Consider $S(\zeta) = 2\partial_\zeta w = w_x - iw_y$. 
The Cauchy-Riemann equations for $S$ follow from harmonicity of $w$, 
and $\nabla w = {\bf x}$ on $\Gamma$ implies $S(\zeta) = \bar{\zeta}$ on $\Gamma$.

As mentioned in the introduction, quadrature domains have an equivalent definition framed in terms of the Schwarz potential.
Namely, the following theorem shows that condition (ii) in the Introduction has a counterpart in higher dimensions,
where instead of ``meromorphic'', 
we have that the Schwarz potential has a single-valued real-analytic continuation throughout $\Omega$ except for finitely many points,
where $\Delta w $ is a distribution of finite order.

\begin{thm}[Khavinson, Shapiro, 1989]\label{KS}
Suppose that $\Omega$ is a connected domain in $\RR^n$ with $C^1$ boundary.  
Then $\Omega$ is a quadrature domain with quadrature formula (\ref{eq:QF2}) if and only if the Schwarz potential of $\partial \Omega$ satisfies
\begin{equation}\label{SPQF}
 w(x) = n \left( H({\bf x}) - Q({\bf x}) \right),
\end{equation}
where $H({\bf x})$ is harmonic in $\Omega$ and $C^1$ in $\overline{\Omega}$, 
and $Q({\bf x})$ is the result of applying the quadrature formula (\ref{eq:QF2}) 
to the fundamental solution $E_n({\bf x},{\bf y}) = c_n ||{\bf x} - {\bf y}||^{2-n}$ (in its second argument).
\end{thm}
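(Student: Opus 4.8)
The plan is to establish the equivalence in Theorem~\ref{KS} by converting the integral identity~(\ref{eq:QF2}) into a statement about the distributional Laplacian of a suitable potential, and then recognizing that potential as (a multiple of) the Schwarz potential. First I would introduce the \emph{Newtonian potential} of $\Omega$, namely $U^\Omega({\bf x}) = \int_\Omega E_n({\bf x},{\bf y})\,dV({\bf y})$, which satisfies $\Delta U^\Omega = -1$ in $\Omega$ (suitably normalized) and is harmonic outside $\overline{\Omega}$; similarly let $U^Q({\bf x})$ be the potential obtained by applying the quadrature functional~(\ref{eq:QF2}) to $E_n({\bf x},{\bf y})$ in the ${\bf y}$-variable, so that $\Delta U^Q$ is exactly the distribution $\sum_{m}\sum_\alpha (-1)^{|\alpha|}a_{m,\alpha}\,\partial_\alpha \delta_{{\bf x}_m}$ supported at the quadrature nodes. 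The quadrature identity~(\ref{eq:QF2}), when applied to the harmonic (in ${\bf y}$, for ${\bf x}$ outside $\overline\Omega$) function $u({\bf y}) = E_n({\bf x},{\bf y})$, says precisely that $U^\Omega \equiv U^Q$ in the unbounded component of the complement of $\overline\Omega$. This is the classical ``balayage'' reformulation and it is the first key step.

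Next I would localize near $\partial\Omega$ and bring in the Schwarz potential $w$. The idea is that the function $v := U^\Omega - U^Q$ is harmonic in a neighborhood of $\partial\Omega$ minus $\overline\Omega$ and vanishes there, so one wants to continue it across $\Gamma = \partial\Omega$ into $\Omega$ and identify it with $w$ up to a harmonic correction. The crucial computation is a direct one: by definition of the Newtonian potential and the divergence theorem, the jump in the normal derivative of $U^\Omega$ across $\Gamma$ is controlled, and matching the Cauchy data of $U^\Omega$ on $\Gamma$ to the Cauchy data $\tfrac12\|{\bf x}\|^2$, ${\bf x}$ prescribed in~(\ref{CP}) shows that, on $\Gamma$, $w$ and $n\,U^\Omega$ (up to an additive harmonic function and the appropriate quadratic polynomial) share Cauchy data. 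One then appeals to the Cauchy--Kovalevskaya uniqueness — exactly the uniqueness already invoked in the \textbf{Definition} of $w$ — to conclude that the continuation of $w$ coincides near $\Gamma$ with $n(U^\Omega - (\text{harmonic}))$, and since $U^\Omega - U^Q$ extends the germ of $U^\Omega$ minus its polar/quadratic part throughout $\Omega$, we get $w = n(H - Q)$ with $H$ harmonic in $\Omega$, $C^1$ up to $\overline\Omega$, and $Q = n^{-1}\cdot(\text{the }E_n\text{-quadrature expression})$, i.e.\ exactly~(\ref{SPQF}). For the converse, I would reverse this chain: given~(\ref{SPQF}), the function $U^\Omega - U^Q$ agrees with a harmonic function near $\Gamma$ on the outside, hence extends harmonically to the unbounded complementary component (using that both potentials are harmonic at infinity and decay), forcing $U^\Omega = U^Q$ outside $\overline\Omega$ by the maximum principle; testing this against harmonic $u$ via Green's identity on $\Omega$ then yields~(\ref{eq:QF2}).

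The step I expect to be the main obstacle is the \emph{analytic continuation / single-valuedness} bookkeeping: one must pass from ``$w$ extends as a single-valued real-analytic function to $\Omega$ except finitely many points where $\Delta w$ is a finite-order distribution'' to the clean decomposition $w = n(H-Q)$, and conversely ensure that the $H$ produced is genuinely harmonic on all of $\Omega$ and $C^1$ up to the boundary rather than merely on a collar. This requires knowing that the germ of $U^\Omega$ near $\Gamma$, after subtracting the local particular solution of $\Delta(\cdot)=-1$ (the quadratic $-\tfrac1{2n}\|{\bf x}\|^2$) and the singular part contributed by $U^Q$, continues \emph{without monodromy} across $\Omega$; this is where the $C^1$-boundary hypothesis and the global structure of the Newtonian potential (as opposed to a purely local Cauchy--Kovalevskaya solution) do the real work. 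I would handle it by working with the globally-defined $v = U^\Omega - U^Q$ from the start — which is manifestly single-valued on $\RR^n$ — and only invoking Cauchy--Kovalevskaya to \emph{identify} $w$ with $n\,v$ plus a correction in a neighborhood of $\Gamma$, so that the global regularity of $v$ is inherited rather than reconstructed. The remaining details — the precise constants $c_n$, the normalization factor $n$ in~(\ref{SPQF}), and checking the Cauchy data match in~(\ref{CP}) — are routine computations with the divergence theorem and the mean-value property.
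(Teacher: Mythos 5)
The paper does not prove Theorem~\ref{KS}: it is cited as a result of Khavinson and Shapiro (from their 1989 report \cite{KS89}) and only illustrated afterwards on the sphere. So there is no in-paper proof to compare against, and I evaluate your reconstruction on its own.

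Your outline is correct and is, in fact, the standard argument. The crucial identity is $U^\Omega = Q$ outside $\overline\Omega$ (balayage), obtained by applying the quadrature identity to $u = E_n(\mathbf{x},\cdot)$; the $C^{1}$-regularity of the volume potential $U^\Omega$ across $\Gamma$ pushes this to equality of Cauchy data on $\Gamma$; and then one sets $H = U^\Omega + \tfrac{1}{2n}\|\mathbf{x}\|^2$ (harmonic in $\Omega$ because $\Delta U^\Omega = -1$ there, $C^1$ up to $\overline\Omega$) and checks directly that $n(H-Q)$ is harmonic in a collar inside $\Gamma$ with Cauchy data $\tfrac12\|\mathbf{x}\|^2$, $\mathbf{x}$, so it coincides with $w$ by Cauchy--Kovalevskaya/Holmgren uniqueness. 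This is exactly the decomposition~(\ref{SPQF}). Your converse via Green's identity also works and reproduces the coefficients $a_{m,\alpha}$ after the usual small-ball computation around the nodes.

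Two small imprecisions you should repair. First, ``forcing $U^\Omega = U^Q$ outside $\overline\Omega$ by the maximum principle'' is not quite the right mechanism: what you actually have is vanishing Cauchy data on $\Gamma$ for the harmonic function $U^\Omega - Q$ in the exterior, and you then extend it by zero into $\Omega$ (the $C^1$ match across $\Gamma$ makes the extension weakly, hence classically, harmonic) and invoke analytic/unique continuation, not the maximum principle. Second, the theorem is stated with only a $C^1$ hypothesis on $\partial\Omega$, yet the Schwarz potential $w$ in~(\ref{CP}) requires a real-analytic non-singular $\Gamma$ for Cauchy--Kovalevskaya to apply; either the statement tacitly presupposes that $w$ exists, or one must first invoke the regularity theory showing that a $C^1$ quadrature-domain boundary is automatically real-analytic. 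Your proposal should acknowledge that the forward direction silently uses this upgrade in regularity before $w$ can even be introduced.
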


{\bf Example:}  Let $\Gamma := \{{\bf x} \in \mathbb{R}^n : ||{\bf x}||^2 = r^2 \}$ be a sphere of radius $r$. 
When $n=2$, it is easy to verify that 
$$w(\zeta) = r^2 \left( \log|\zeta| + 1/2 - \log(r) \right)$$ 
solves the Cauchy Problem (\ref{CP}), and in higher dimensions the Schwarz potential is 
$$w({\bf x}) = \frac{n}{2(n-2)} r^2 - \frac{r^n}{(n-2)||{\bf x}||^{n-2}}.$$
Let us check that this agrees with Theorem \ref{KS}.
The mean-value property for the ball of radius $r$ about zero, $B_r({\bf 0})$, gives the quadrature formula:
$$\int_{B_r({\bf 0})} u dV = \text{Vol}\left(B_r\right) u(0).$$
Applying this quadrature formula to $E_n({\bf x},{\bf y})$ in its second argument gives 
$$Q({\bf x}) = \text{Vol}(B_r) E_n({\bf x}) = \text{Vol}(B_r) c_n ||{\bf x}||^{2-n} = r^n \text{Vol}(B_1) c_n ||{\bf x}||^{2-n}$$
$$ = \frac{r^n}{n(n-2)} ||{\bf x}||^{2-n},$$
since the constant $c_n$ appearing in the fundamental solution is ${\displaystyle \frac{1} {(n-2)\omega_n}}$, where $\omega_n$ is the surface area of the unit sphere.
Taking ${\displaystyle H({\bf x}) = \frac{n}{2(n-2)} r^2}$, we obtain the same formula (\ref{SPQF}).

\section{An example of a non-algebraic quadrature domain}\label{sec:QD}

A simple case of particular interest is when the quadrature formula (\ref{eq:QF2}) is supported at a single point.

\begin{equation}\label{eq:QFsingle}
 \int_{\Omega} {u dV} = \sum_{|\alpha|=0}^{M}{a_{\alpha} {\partial_{\alpha} u} (0)},
\end{equation}
where $\alpha = (\alpha_1,\alpha_2,..,\alpha_n)$ again is a multi-index.

This quadrature formula corresponds to a domain that has only finitely many non-vanishing harmonic moments.
In two dimensions, this is equivalent to the conformal map from the disk being a polynomial.

Under the condition that the coefficients $a_{\alpha}$ for $|\alpha| > 0$ are small in comparison with the leading coefficient $a_{(0,0,..,0)}$, 
the existence and uniqueness of domains 
admitting a quadrature formula of the type (\ref{eq:QFsingle}) 
follows from one of the earliest results on the solvability of the inverse potential problem \cite{Sretenskii38} (cf. \cite{Ivanov56}).

Let us further specialize to the case when all derivatives in the quadrature formula (\ref{eq:QFsingle}) 
are with respect to the same variable, say $x_1$.

\begin{equation}\label{eq:QF3}
 \int_{\Omega} {u dV} = \sum_{k=0}^{m} a_k \frac{\partial^k u}{\partial x_1 ^k}(0).
\end{equation}

In \cite{Karp92}, special cases were explicitly generated by rotating the limacon 
$D:=\{\zeta: \zeta = w + \sigma w^2, |w| < 1\}$ about the $x$-axis, where $\sigma > 1/2$ is a real parameter.
Before rotation, this starts out as a two-dimensional quadrature domain with quadrature formula
$$\int_{D} {f(\zeta) dA} = \pi[(1+2\sigma^2) f(0) + \sigma  f'(0)].$$

The four-dimensional domain generated by rotation is a quadrature domain with quadrature formula
$$\int_{\Omega} {u dV} = \pi^2 \left[\frac{(1+6\sigma^2 + 2 \sigma^4)}{2} u(0) + 
\frac{\sigma(1+2\sigma^2)}{2} \frac{\partial u}{\partial x_1}(0) + \frac{\sigma^2}{12}\frac{\partial^2 u}{\partial x_1 ^2}(0) \right].$$
where $x_1$ corresponds to the axis of symmetry.

We prove Conjecture \ref{conj:quad} in $\RR^4$ by constructing a domain admitting the quadrature formula (\ref{eq:QF3}) 
with $m=1$, $a_0 > 0$, and $a_1 > 0$.

\begin{thm}\label{thm:nonalg}
In $\RR^4$, there exist quadrature domains that are not algebraic,
namely, admitting the quadrature formula (\ref{eq:QF3}) with $m=1$, $a_0>0$, and $a_1>0$ (with $a_1$ small in comparison with $a_0$).
Moreover, the boundary can be described explicitly in terms of elliptic integrals of the third kind.
\end{thm}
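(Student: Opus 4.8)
The plan is to exploit the rotational symmetry to reduce the four-dimensional free boundary problem to a one-dimensional ODE for the generating curve, then show that the resulting curve is transcendental. First I would set up coordinates adapted to the axis of symmetry $x_1$: write points of $\RR^4$ as $(x_1, \mathbf{y})$ with $\mathbf{y} \in \RR^3$, put $t = x_1$ and $\rho = \|\mathbf{y}\|$, and look for a domain $\Omega$ obtained by revolving a planar profile curve $\Gamma_0$ in the $(t,\rho)$ half-plane about the $t$-axis. Using Theorem \ref{KS}, the condition that $\Omega$ satisfies the quadrature formula (\ref{eq:QF3}) with $m=1$ becomes the requirement that the Schwarz potential $w$ of $\partial\Omega$ equals $4(H - Q)$ where $Q$ is obtained by applying $u \mapsto a_0 u(0) + a_1 \partial_{x_1} u(0)$ to the fundamental solution $E_4(\mathbf{x},\mathbf{y}) = c_4\|\mathbf{x}-\mathbf{y}\|^{-2}$ in its second argument. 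Concretely $Q(\mathbf{x}) = a_0 c_4 \|\mathbf{x}\|^{-2} + a_1 c_4 \, \partial_{x_1}\big(\|\mathbf{x}\|^{-2}\big)$, a rational function of $(t,\rho^2)$ with a pole at the origin; so $w$ is harmonic and single-valued in $\Omega \setminus \{0\}$ with exactly this singular part.

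The next step is to translate the overdetermined Cauchy data $w|_\Gamma = \tfrac12\|\mathbf{x}\|^2$, $\nabla w|_\Gamma = \mathbf{x}$ into a condition on $\Gamma_0$. Because everything is axially symmetric, $w = w(t,\rho)$ solves the axially symmetric Laplace equation $w_{tt} + w_{\rho\rho} + \tfrac{2}{\rho} w_\rho = 0$, and the requirement that such a $w$ with the prescribed singular part $-Q$ matches the Cauchy data on $\Gamma_0$ is precisely the statement that $\Omega$ is a quadrature domain for the single-point formula. I would then use the known local structure near the axis together with an explicit ansatz for the regular part $H$: since $H$ is harmonic and axially symmetric and $C^1$ up to $\overline{\Omega}$, and the whole problem is a small perturbation (when $a_1/a_0$ is small) of the ball case from the Example after Theorem \ref{KS}, I expect $H$ to be a low-degree axially symmetric harmonic polynomial in $(t,\rho^2)$ whose coefficients are fixed by the normalization. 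The boundary $\Gamma_0$ is then cut out by the single scalar equation $w(t,\rho) = \tfrac12(t^2 + \rho^2)$ once $w = 4(H - Q)$ is known, and one reads off that $\rho^2$ is an algebraic function of $t$ only if $Q$ and $H$ conspire to make the defining relation polynomial — which fails for generic small $a_1 > 0$.

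To make the "not algebraic" conclusion rigorous and to get the promised elliptic-integral description, I would argue contrapositively using the Schwarz function of the \emph{profile curve} in the meridian plane, or equivalently track the branch structure of $S(\zeta)$ for the two-dimensional slice. The meridian section $\Omega \cap \{\mathbf{y} \parallel e_2\}$ is a planar domain whose boundary, if $\partial\Omega$ were algebraic, would also be algebraic, hence its Schwarz function would be an algebraic function with only finitely many branch points. I would instead show that the reduction of Cauchy problem (\ref{CP}) for the axially symmetric operator — after the standard substitution relating axially symmetric potentials in $\RR^4$ to analytic data via an Euler–Poisson–Darboux type representation — forces the Schwarz function of $\Gamma_0$ to be expressed by integrating a rational differential on an elliptic curve $v^2 = (\text{cubic or quartic in } \zeta)$, the cubic/quartic arising from the factor $\|\mathbf{x}\|^{-2}$ differentiated once in $x_1$. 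The resulting incomplete elliptic integral of the third kind (third kind because $Q$ contributes a simple pole, not just branch points) has a nontrivial period lattice, so $S$ is genuinely multivalued with infinitely many sheets, contradicting algebraicity of $\partial\Omega$. The main obstacle I anticipate is precisely this last step: carefully justifying that the periods do not degenerate — i.e. that for an open set of parameters $(a_0, a_1)$ with $a_1$ small and positive the elliptic curve is not rational and the third-kind integral is not secretly elementary — and simultaneously verifying that the domain $\Omega$ one constructs is genuinely bounded, has $C^1$ boundary, and contains the origin, so that Theorem \ref{KS} applies. The existence half is cushioned by the perturbative regime (small $a_1/a_0$) and the cited solvability results for the inverse potential problem; the transcendence half is where the real work lies.
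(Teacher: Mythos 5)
There is a genuine gap, and it is structural: your construction is circular. You propose to cut out the profile curve $\Gamma_0$ from the level-set equation $w(t,\rho) = \tfrac12(t^2+\rho^2)$ once $w = 4(H-Q)$ is ``known,'' but $H$ is the regular harmonic part of the Schwarz potential, which depends on $\Omega$ itself; you cannot write down $w$ before you have the domain. Your proposed way out --- guessing that $H$ is a ``low-degree axially symmetric harmonic polynomial in $(t,\rho^2)$'' --- is unjustified and in fact incompatible with what you are trying to prove: if $H$ were polynomial, the relation $4(H-Q) = \tfrac12(t^2+\rho^2)$ would give an algebraic boundary, since $Q$ is rational. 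Perturbative existence results (Sretenskii, Ivanov) give you a domain, but not an explicit one, and without an explicit handle you cannot track the branching of the Schwarz function to conclude non-algebraicity or exhibit the elliptic integral.

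The missing idea is the one the paper uses to break the circularity: parametrize the meridian section by a conformal map $f$ of $\DD$, and turn the Schwarz-potential condition into a functional equation for $f$ alone. The axially-symmetric reduction (your $\Delta(yW)=0$ in $\RR^4$) plus the Cauchy data give $V_\zeta = \tfrac{i}{4}(\zeta - S(\zeta))^2$ for $V = yW - \Im\{\zeta^3\}/12$, and substituting $\zeta = f(z)$, $S(f(z)) = f(1/z)$ yields $V_\zeta(f(z)) = \tfrac{i}{4}\bigl(f(z) - f(1/z)\bigr)^2$. So instead of trying to solve a free boundary problem, one only needs to \emph{choose} $f$ so that $\bigl(f(z)-f(1/z)\bigr)^2$ is rational with a single pole of order $3$ at $z=0$ inside $\overline\DD$; the paper does this by splitting the Laurent series of $h(z) = C\tfrac{z^2-1}{z}\sqrt{\tfrac{z+a}{z}(1+az)}$, recovering $f$ from the Cauchy integral of $h$ over $|z|=1$. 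The elliptic integral of the third kind then arises, not as a formula for $S$ in the physical variable $\zeta$ as you suggest, but from deforming this Cauchy contour onto the slit $(-\infty, -1/a]$; the infinitely-sheeted monodromy of that integral shows $f$ (hence $f^{-1}$) is transcendental, and the functional equation $g(f^{-1}(\zeta)) = (\zeta - S(\zeta))^2$ with $g$ rational converts algebraicity of $S$ into algebraicity of $f^{-1}$, giving the contradiction. Your contrapositive outline (algebraic boundary $\Rightarrow$ algebraic Schwarz function of the profile) is correct and matches the paper, but it only closes once you have the conformal-map construction to feed into it.
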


We will work in terms of the Schwarz potential introduced in the previous section.
Before beginning the proof we make an observation regarding axially symmetric potentials.

Recall the axially symmetric reduction of Laplace's equation:

Suppose that $u(x_1,x_2,...,x_n)$ is harmonic in $\RR^n$ and axially symmetric about the $x_1$-axis.
Write $U(x,y) = u(x_1,x_2,...,x_n)$, where $x=x_1$ and $y = \sqrt{x_2^2+...+x_n^2}$.
Then, 
$$\Delta U + \frac{(n-2)U_y}{y} = 0.$$
Indeed, $\Delta u = \text{div} \left( U_x,U_y y_{x_2},U_y y_{x_3},...,U_y y_{x_n}\right)$

\hspace{.6 in} ${\disp = U_{xx} + U_{yy} \left(\sum_{i=2}^n y_{x_i}^2 \right) + U_y \left(\sum_{i=2}^n y_{x_i x_i} \right),}$

where easy calculations give $\sum_{i=2}^n y_{x_i}^2 = 1$ and ${\displaystyle \sum_{i=2}^n y_{x_i x_i} = \frac{(n-2)}{y}}$.

In the case $n=4$, $U$ satisfies the equation ${\displaystyle \Delta U + \frac{2 U_y}{y} = 0}$,
iff $y U(x,y)$ is a harmonic function of two variables.
Indeed, 
$$\Delta (y U) = y \Delta U + 2 \nabla U \cdot \nabla y + U \Delta y = y \Delta U + 2 U_y.$$
This fact about axially symmetric potentials in $\mathbb{R}^4$ was used in \cite{Khav91} and \cite{Karp92} where it is explained in more detail.

If $\Omega$ is a domain in $\RR^4$ with axially symmetry, then the above considerations apply to the Schwarz potential, 
since the rotational-symmetry of the Cauchy data (\ref{CP}) is passed to the solution $w$.

Thus, the Cauchy problem (\ref{CP}) defining the Schwarz potential $w$ is reduced to the following two-dimensional Cauchy problem
for $W(x,y) = w(x_1,x_2,x_3,x_4)$:
\begin{equation}\label{CPW}
\left\{
\begin{array}{l}
{\displaystyle \Delta W +  \frac{2 W_y}{y}} = 0 $, near $ \Gamma \\
{\displaystyle W|_{\gamma} =  \frac{1}{2} (x^2+y^2) } \\
\nabla W|_{\gamma}=\langle x,y \rangle
\end{array}\right.
\end{equation}
where $\gamma$ is the symmetric curve in the plane whose rotation generates the boundary of $\Omega$.

According to the previous observation, we have
\begin{equation}\label{eq:harmonic}
 \Delta \left( y W(x,y) \right) = 0,
\end{equation}
which will be used below.

\begin{proof}[Proof of Theorem \ref{thm:nonalg}]

We will construct $\Omega$ by first describing a conformal map $f:\DD \rightarrow D_p$ from the unit disk $\DD$ 
to a domain $D_p$ in the plane symmetric with respect to the real axis.
Then we will take $\Omega$ to be the domain generated by rotation of $D_p$ into $\RR^4$ about the $x$-axis.
i.e., $(x_1,x_2,x_3,x_4) \in \Omega$ if and only if $\left( x_1,\sqrt{x_2^2 + x_3^2 + x_4^2} \right) \in D_p$.
Notice that the boundary $\Gamma = \p \Omega$ is algebraic if and only if $\gamma = \p D_p$ 
is algebraic, in which case $\gamma = \{ \rho(x,y) = 0 \}$, and $\Gamma = \left\{ \rho \left( x_1,\sqrt{x_2^2 + x_3^2 + x_4^2} \right) = 0 \right\}$,
where $\rho(x,y)$ is a polynomial that is even in the variable $y$.

The conformal map $f$ and its relevant properties will be established in the proofs of the following claims.

\noindent {\bf Claim 1:} There exists a function $f$ with real coefficients, analytic and univalent in a neighborhood of $\overline{\DD}$, with $f(0) = 0$, 
so that the function
\begin{equation}\label{eq:fct1}
\left(f(z) - f\left(\frac{1}{z}\right)\right)^2 = g(z)
\end{equation} 
is rational, and its only pole in $\ol{\DD}$ is a pole of exact order $3$ at $z=0$.

\noindent {\bf Claim 2:} The (analytically continued) function $f$ has an infinitely-sheeted Riemann surface.

We defer the proofs of the claims in favor of first seeing how they are used to prove the theorem.

Since, by Claim 1, $D = f(\DD)$ is the image of $\DD$ under a univalent function analytic in a neighborhood of $\overline{\DD}$,
the boundary of $D_p$, and therefore of $\Omega$, is analytic and non-singular.  
Thus $\partial \Omega$ has a Schwarz potential $w(x_1,x_2,x_3,x_4)$, which by axial symmetry can be reduced to a function $W(x,y)$ of two variables
(see the discussion above just before the proof).
As stated in (\ref{eq:harmonic}), $yW(x,y)$ is harmonic near $\partial D_p$.

Let $\zeta = x + iy$. 
Then the function 
$$V(x,y) = y W(x,y) - \Im \{ \zeta^3 \} / 12,$$ 
is harmonic 
(subtracting the harmonic term $\Im \{ \zeta^3 \} / 12$ simplifies the following calculations without changing the singularities of $V$).

Next, consider the (complex-analytic) function obtained by taking the $\zeta$-derivative $V_\zeta = \frac{1}{2}(V_x - i V_y)$.
$$V_\zeta = \frac{-i}{2}W + y W_\zeta + \frac{i \zeta^2}{4}$$

The Cauchy data for $W$, stated in (\ref{CPW}), can be written $W = \zeta \bar{\zeta}/2$ and $W_\zeta = \bar{\zeta}/2$,
so that on the boundary $\gamma = \partial D_p$, the equation above gives
$$V_\zeta|_{\gamma} = \frac{i \zeta \bar{\zeta}}{4} + \frac{\zeta-\bar{\zeta}}{4i}(\bar{\zeta}) + \frac{i\zeta^2}{4} = \frac{i}{4} \left( \zeta - \bar{\zeta} \right)^2,$$
where we have also used ${\displaystyle y =  \frac{\zeta - \bar{\zeta}}{2i}}$.

Now we replace $\bar{\zeta}$ with $S(\zeta)$ the Schwarz function of $\gamma$.
Then, both sides of the equation are analytic in a neighborhood of $\gamma$, and thus the following becomes an identity (not just valid on the boundary).
\begin{equation}\label{eq:Vzeta}
 V_\zeta = \frac{i}{4} \left( \zeta - S(\zeta) \right)^2.
\end{equation}

This relates the $4$-dimensional Schwarz potential to the $2$-dimensional Schwarz function.

As follows from (\ref{formula}) when the coefficients of $f$ are real, 
the Schwarz function of $\gamma$ satisfies the functional equation
$$S(f(z)) = f \left( \frac{1}{z} \right).$$
Using this relationship, and substituting $\zeta = f(z)$ into (\ref{eq:Vzeta}), we get
$$V_{\zeta}(f(z)) = \frac{i}{4}{\displaystyle \left(f(z) - f\left(\frac{1}{z}\right)\right)^2}.$$
By Claim 1 the right hand side is $g(z)$, a function analytic in a neighborhood of $\ol{\DD}$, except for a pole at $z=0$ of order $3$.
So, ${\displaystyle g(z) = \frac{Q(z)}{z^3} }$, with $Q(z)$ analytic in a neighborhood of $\ol{\DD}$ and $Q(0) \neq 0$.

Since $f(0) = 0$ and $f$ is univalent, we have ${\displaystyle f^{-1}(\zeta) = \zeta h(\zeta) }$, for some $h(\zeta)$ analytic and non-vanishing in $D_p$.
Thus,
$$V_{\zeta}(\zeta) = g(f^{-1}(\zeta)) = \frac{1}{\zeta^3} \frac{Q(f^{-1}(\zeta))}{h(\zeta)^3},$$
which is analytic in $D_p$ except for a pole of order $3$ at $\zeta = 0$.

This implies that (see \cite[p. 183]{Karp92}) 
$$w(x_1,x_2,x_3,x_4) = w({\bf x}) = A \cdot ||{\bf x}||^{-2} + B \cdot \p_{x_1} (||{\bf x}||^{-2}) + H({\bf x}),$$ 
where $A$ and $B$ are constants and $H({\bf x})$ is harmonic.

Since, up to a constant, $||{\bf x}||^{-2}$ is the fundamental solution $E_4({\bf x},{\bf 0})$ evaluated at ${\bf y}={\bf 0}$, 
Theorem \ref{KS} implies $\Omega$ is a quadrature domain with quadrature formula of the form
$$\int_{\Omega} {u dV} = a_0 u({\bf 0}) + a_1 \frac{\partial u}{\partial x_1}({\bf 0}).$$

Next we apply Claim 2 in order to show that the boundary of $\Omega$ is not algebraic.

{\bf Note:} It does not follow immediately from Claim 2 that the image of $\partial \DD$ is non-algebraic, 
since it is possible for an algebraic domain to be conformally mapped from the unit disk by a transcendental function.
Indeed, any non-circular ellipse is such an example.

Suppose, to the contrary, that $\gamma = \p D_p$ is the zero set of a polynomial.
Then, as observed in Section \ref{sec:SP}, the Schwarz function $S(\zeta)$ is an algebraic function, 
since it can be obtained by solving for $\bar{\zeta}$ in the equation 
$$P \left( \frac{\zeta+\bar{\zeta}}{2},\frac{\zeta-\bar{\zeta}}{2i} \right)=0.$$

Substitute $z = f^{-1}(\zeta)$ into (\ref{eq:fct1}) and use the formula (\ref{formula}) 
$$ S(\zeta) = f \left( \frac{1}{f^{-1}(\zeta)}\right),$$
where again, the conjugation is missing because the coefficients of $f$ are real.
We obtain
$$g(f^{-1}(\zeta)) = (\zeta - S(\zeta))^2.$$
Since $g(z)$ is rational, and $S(\zeta)$ is algebraic, this implies
$$f^{-1}(\zeta) = g^{-1}\left( (\zeta - S(\zeta))^2 \right)$$ 
is also an algebraic function.
This contradicts the fact that $f^{-1}(\zeta)$ is the inverse of a transcendental function.
We conclude that $S(\zeta)$ is transcendental, and the boundary of $\Omega$ is not contained in the zero set of a polynomial.

\end{proof}

It remains to prove the claims.

\begin{proof}[Proof of Claim 1]

With $a>0, C > 0$ real parameters, let
$$ h(z) = C\frac{z^2-1}{z}\sqrt{\frac{z+a}{z}(1+az)} .$$
It is easy to see that the square root above has two single valued analytic branches on $|z| = 1$.
We choose that branch which is $>0$ at $z=1$.

We observe that
\begin{equation}\label{eq:sym}
 h(1/z) = - h(z).
\end{equation}

Since $h(z)$ is analytic in a neighborhood of $\p \DD$, say $\{1-\e < |z| < 1+\e \}$ it has a Laurent expansion
$$h(z) = \sum_{j= -\infty}^{\infty}{c_j z^j}.$$

Let $f(z) = \sum_{j=1}^{\infty}{c_j z^j}$.
By (\ref{eq:sym}), $c_j = -c_j$, and $c_0 = 0$.  

Therefore, 
\begin{equation}\label{eq:h2}
 h(z) = f(z) - f \left( \frac{1}{z} \right),
\end{equation}
where $f(z)$ is analytic in a neighborhood of $\ol{\DD}$, and $f(1/z)$ is analytic in a neighborhood of $\CC \setminus \DD$ 
with $f(1/z) \rightarrow 0$ as $z \rightarrow \infty$.

Observe that $f(z)$ satisfies (\ref{eq:fct1}) if we take
$$ g(z) = h(z)^2 = C^2\frac{(z^2-1)^2(z+a)(1+az)}{z^3}.$$
We remark that, up to changing the sign of the parameter $a$, this is the only choice of $g$ for which all the conditions in the claim can be satisfied.

For $w \in \DD$, $f(w)$ can be expressed as the contour integral 
$$f(w) = \frac{1}{2 \pi i}\int_{|z|=1}{\frac{h(z)}{z-w}dz}.$$

Indeed, by (\ref{eq:h2})
$$\int_{|z|=1}{\frac{h(z)}{z-w}dz} = \int_{|z|=1}{\frac{f(z) - f(\frac{1}{z})}{z-w}dz} = \int_{|z|=1}{\frac{f(z)}{z-w}dz} - \int_{|z|=1}{\frac{f(\frac{1}{z})}{z-w}dz},$$
and we have ${\displaystyle \int_{|z|=1}{\frac{f(1/z)}{z-w}dz} = 0}$, 
since the integrand is analytic in $\CC \setminus \overline{\DD}$ and is $O(1/|z|^2)$ as $z \to \infty$.
Moreover, since $f(z)$ is analytic in a neighborhood of $\overline{\DD}$, the Cauchy integral formula gives ${\displaystyle \frac{1}{2\pi i}\int_{|z|=1}{\frac{f(z)}{z-w}dz} = f(w)}$.

So we have
\begin{equation}\label{eq:CauchyInt}
 f(w) = \frac{C}{2\pi i}\int_{|z|=1}{\frac{(z^2-1)\sqrt{(1+\tfrac{a}{z})(1+az)}}{z(z-w)}dz}.
\end{equation}

Next, we show that $f(w)$ is univalent at least for sufficiently small values of the parameter $0<a<1$.
The parameter $C$ does not affect the univalence of $f(w)$, so we fix $C=1$.

Use (\ref{eq:CauchyInt}) to write the derivative $f'(w)$:
$$f'(w) = \frac{-1}{2\pi i}\int_{|z|=1+\varepsilon}{\frac{(z^2-1)\sqrt{(1+\tfrac{a}{z})(1+az)}}{z(z-w)^2}dz}, $$
where we have also deformed the contour slightly with $\varepsilon > 0$.
As $a \rightarrow 0$, the integrand converges (uniformly for $|z|=1+\varepsilon$) to ${\displaystyle \frac{(z^2-1)}{z(z-w)^2} }$.
Thus, $f'(w)$ converges uniformly in $\ol{\DD}$ to
$$\frac{1}{2\pi i}\int_{|z|=1+\varepsilon}{\frac{z}{(z-w)^2} - \frac{1}{z(z-w)^2} dz} = 1,$$
using residues.

This guarantees univalence of $f(w)$ throughout $\ol{\DD}$ for all sufficiently small $a>0$.
Indeed, for any two points $w_1$ and $w_2$ in $\ol{\DD}$, 
$$f(w_2) - f(w_1) = \int_{w_1}^{w_2}{f'(w) dw},$$
and $f'(w)$ is uniformly close to $1$ for $a>0$ sufficiently small.

\end{proof}

\begin{proof}[Proof of Claim 2]

We investigate the global analytic function whose principal branch is given by the integral (\ref{eq:CauchyInt}).

We will deform the contour in (\ref{eq:CauchyInt}), 
but first we manipulate the integrand so that the new integral will converge.
Using
$$\frac{(z^2-1)}{z(z-w)} = 1 + \frac{w}{z} + \frac{w^2-1}{z(z-w)},$$
we have 
$$f(w) = A_0 + A_1 w + C(w^2-1) F(w), $$
where $A_0$ and $A_1$ are integrals that do not depend on $w$, and 
\begin{equation}
 F(w) = \frac{1}{2\pi i}\int_{|z|=1}{\frac{\sqrt{(1+\tfrac{a}{z})(1+az)}}{z(z-w)}dz}.
\end{equation}
Now deform the contour until it it ``just surrounds'' the segment $(-\infty,-1/a]$.  
The integrand changes sign as it switches sides, and the path changes direction. 
Thus, we obtain twice the integration along a segment:
\begin{equation}\label{eq:mainF}
 F(w) = \frac{1}{\pi i}\int_{-\infty}^{-1/a}{\frac{\sqrt{(1+\tfrac{a}{z})(1+az)}}{z(z-w)}dz} = \frac{1}{\pi i}\int_{-\infty}^{-1/a}{\frac{\sqrt{G(z)}}{(z-w)}dz},
\end{equation}
where $G(z) = (1+\tfrac{a}{z})(\tfrac{1}{z}+a)\tfrac{1}{z}$.
By further manipulation, this can be expressed in terms of a standard form of a \emph{complete elliptic integral of the third kind},
which we will do at the end of this section after the proof.
For now, we show directly from (\ref{eq:mainF}) that the analytic continuation of $F(w)$, and hence $f(w)$, has an infinitely-sheeted Riemann surface.

The monodromy for $F(w)$ can be obtained as a nice exercise in applying the Sokhotski-Plemelj relation \cite[Ch. 14]{Henrici86}.
Accordingly, the jump across the segment $(-\infty,-1/a]$ of such an integral as appears in (\ref{eq:mainF})
is equal to $2 \pi i$ times the numerator of the integrand.
i.e., let $w$ approach a point $x$ on the segment $(-\infty,-1/a]$ from above and below, respectively, then
\begin{equation}\label{PPSW}
 F(x_+) - F(x_-) = -2\sqrt{G(x)}.
\end{equation}

In order to perform analytic continuation, choose a base point $w_0$, say in the upper half-plane, 
and let $F_0(w)$ denote the principal branch (which is expressed by the integral itself).
Then, let $\gamma_1$ be a loop based at $w_0$ that winds once around $1/a$.
According to (\ref{PPSW}), performing analytic continuation along $\gamma_1$, we obtain a new branch $F_1(w) = F_0(w) + 2\sqrt{G(w)}$.
If $\gamma_1$ is traced again, then we return to the original branch since $2\sqrt{G(w)}$ switches sign and cancels the contribution from $F_0$.
So $w=1/a$ only has ramification index 2.

Consider a second loop $\gamma_2$ based at $w_0$ that winds around the origin, crossing the segment $[0,a]$.
Traveling along this loop does nothing to the initial branch $F_0$, but if we first continue along $\gamma_1$, then the branch $F_1$
will be affected when we continue along $\gamma_2$.
Alternate between these two loops and perform analytic continuation along $\gamma_1 \cdot \gamma_2 \cdot \gamma_1 \cdots \gamma_1$, 
where $\gamma_1$ is traced $k+1$ times.
Let $F_i(w)$ denote the new branch obtained at each step.  Then we have 
$$F_1(w) = F_0(w) + 2 \sqrt{G(z)}$$
$$F_2(w) = F_0(w) - 2 \sqrt{G(z)}$$
$$F_3(w) = F_0(w) + 2 \sqrt{G(z)} + 2 \sqrt{G(z)}$$
$$F_4(w) = F_0(w) - 4 \sqrt{G(z)}$$
$$F_5(w) = F_0(w) + 2 \sqrt{G(z)} + 4 \sqrt{G(z)}$$
$$F_{2k+1}(w) = F_0(w) + 2(k+1)\sqrt{G(z)}$$
Thus, the Riemann surface for $F(w)$ is infinitely-sheeted.

\end{proof}

{\bf Reduction of the integral to a standard form:} 
Let us make $f(w)$ more explicit by reducing $F(w)$ to a standard form.
Make the substitution $z = -\frac{1}{\xi}$, $dz = \frac{d\xi}{\xi^2}$, in (\ref{eq:mainF}) to get
$$ F(w) = \frac{1}{\pi i}\int_{0}^{a}{\frac{\sqrt{(1-a\xi)(1-a/\xi)}}{(1+\xi w)}d\xi} = \frac{1}{\pi i}\int_{0}^{a}{\frac{\sqrt{(1-a\xi)(\xi-a)\xi}}{\xi(1+\xi w)}d\xi}.$$
Using $\frac{1}{\xi(1+\xi w)} = \frac{1}{\xi} - \frac{w}{1+\xi w}$,
$$F(w) = C_0 + \frac{\sqrt{a}}{\pi } \int_{0}^{a}{\frac{\sqrt{(\xi-1/a)(\xi-a)\xi}}{(\xi + 1/w)}d\xi},$$
where $C_0$ is an integral that doesn't depend on $w$.

For the remaining integral we write 

$\int_{0}^{a}{\frac{\sqrt{(\xi-1/a)(\xi-a)\xi}}{(\xi + 1/w)}d\xi} = (\xi-1/a)(\xi-a)\xi \int_{0}^{a}{\frac{1}{(\xi + 1/w)\sqrt{(\xi-1/a)(\xi-a)\xi}}d\xi}.$

This integral on the right hand side is entry $4$ from section $3.137$ of \cite{Grad},
where it is expressed in terms of a complete elliptic integral of the third kind.
The argument $w$ appears only within the so-called \emph{elliptic characteristic}.

\section{Laplacian growth in $\RR^n$}\label{sec:LG}

Given an initial domain $\Omega_0$, consider the following moving-boundary problem.
Find a one-parameter family of domains, $\{ \Omega_t \} \subset \mathbb{R}^n$ so that the normal velocity, 
$v_n$, of the boundary $\Gamma_t := \partial \Omega_t$ is determined by Green's function, $P({\bf x},t)$, 
of $\Omega_t$ with a fixed singularity positioned at ${\bf x}_0 \in \Omega_t$.
\begin{equation}\label{LG}
\left\{
\begin{array}{l}
v_n|_{\Gamma_t}=-\nabla P \\
\Delta P = 0 $, in $ \Omega_t \\
P|_{\Gamma_t} = 0 \\
P({\bf x } \rightarrow {\bf x_0},t) \sim Q \cdot E_n({\bf x},{\bf x_0}),
\end{array}\right.
\end{equation}
where $E_n$ is the fundamental solution of the Laplace equation with singularity at ${\bf x}_0$ and $Q>0$ ($Q<0$) determines the injection (suction) rate at the source (sink) ${\bf x}_0$.

If the domains $\Omega_t$ are bounded as in the case considered below, with $Q>0$ problem (\ref{LG}) actually produces a \emph{shrinking} boundary.  
We get a growth process if $\Omega_t$ contains infinity so $P$ then solves an \emph{exterior} Dirichlet problem.  
In such a situation, it is common to place the sink at infinity by prescribing asymptotics for $\nabla P$ 
so that the flux across neighborhoods of infinity is proportional to $Q$.  

This is a \emph{non-linear} moving boundary problem that models viscous fingering in a Hele-Shaw cell (when $n=2$) or bubble growth in a porous media (when $n=3$).
These processes exhibit complicated pattern formation \cite{MPST2006},
yet miraculously, there turns out to be an abundance of explicit, exact solutions in the plane.
This miracle is partly explained by Richardson's Theorem \cite{Rich72}, which guarantees that the property of $\Omega$ being a quadrature domain is
preserved under Laplacian growth, and moreover the time-evolution of the quadrature formula is simple.

\begin{thm}[S. Richardson]
\label{Rich}
If $\Omega_t$ solves Problem (\ref{LG}), then for any harmonic function $u$
$$\frac{d}{dt} \int_{\Omega_t}{u dV} = Q u(x_0).$$
\end{thm}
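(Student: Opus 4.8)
The plan is to compute $\frac{d}{dt}\int_{\Omega_t} u\, dV$ directly by differentiating under the integral sign, using the fact that the time-derivative of a volume integral over a moving domain picks up a boundary flux term governed by the normal velocity. Concretely, for a one-parameter family of smoothly varying domains with boundary $\Gamma_t$ moving with normal velocity $v_n$, the transport formula gives
\begin{equation*}
\frac{d}{dt}\int_{\Omega_t} u\, dV = \int_{\Gamma_t} u\, v_n\, dS,
\end{equation*}
since $u$ itself does not depend on $t$. Substituting the first equation of \eqref{LG}, namely $v_n = -\partial P/\partial \nu$ (the inward/outward normal derivative of $P$, with the sign convention matched to how \eqref{LG} is stated), this becomes $-\int_{\Gamma_t} u\, \frac{\partial P}{\partial \nu}\, dS$.

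The second step is to recognize this boundary integral via Green's identity. Since $P$ vanishes on $\Gamma_t$ and is harmonic in $\Omega_t$ away from the singularity at ${\bf x}_0$, applying Green's second identity to the pair $(u, P)$ on the region $\Omega_t$ minus a small ball around ${\bf x}_0$, and then letting the ball shrink, the only surviving contribution is from the prescribed singularity $P \sim Q\cdot E_n({\bf x},{\bf x}_0)$. The term $\int_{\Gamma_t} P\, \frac{\partial u}{\partial \nu}\, dS$ drops out because $P|_{\Gamma_t}=0$, and the interior term $\int u\,\Delta P - P\,\Delta u$ vanishes since both are harmonic there. What remains is the residue-type contribution of the fundamental solution, which by the defining normalization of $E_n$ (the same $c_n$ appearing in Theorem \ref{KS}) evaluates to $Q\, u({\bf x}_0)$. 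This is the standard computation showing that the harmonic measure / Green's function reproduces point values.

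I would carry this out in the order: (1) state the transport/Reynolds formula for the moving domain and reduce to the boundary flux integral; (2) set up Green's identity on $\Omega_t \setminus B_\varepsilon({\bf x}_0)$; (3) take $\varepsilon \to 0$ and extract the point evaluation from the fundamental-solution singularity, keeping careful track of the sign and of the factor $Q$.

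The main obstacle is bookkeeping rather than conceptual: getting the orientation of the normal and the sign in $v_n|_{\Gamma_t} = -\nabla P$ consistent with the transport formula, and confirming that the normalization constant in $E_n$ produces exactly $Q\,u({\bf x}_0)$ with no stray factor of $\omega_n$ or $n$. One should also note in passing that $u$ need only be harmonic (hence smooth) in a neighborhood of $\overline{\Omega_t}$ for the manipulations to be valid, and that the family $\Omega_t$ is assumed regular enough (which holds in the exact solutions constructed here) for the differentiation under the integral to be justified; no issue arises from the singularity of $P$ since ${\bf x}_0$ stays in the interior.
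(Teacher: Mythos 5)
The paper does not prove Theorem~\ref{Rich}; it is stated as a known result and cited from \cite{Rich72}, so there is no in-paper argument to compare against. Your proposal supplies the standard proof, and it is correct: the Reynolds transport formula reduces $\frac{d}{dt}\int_{\Omega_t} u\,dV$ to $\int_{\Gamma_t} u\,v_n\,dS$ (since $u$ is time-independent), the kinematic condition $v_n = -\partial P/\partial\nu$ converts this to $-\int_{\Gamma_t} u\,\frac{\partial P}{\partial\nu}\,dS$, and Green's second identity applied to $(u,P)$ on $\Omega_t\setminus B_\varepsilon({\bf x}_0)$ --- using $P|_{\Gamma_t}=0$, harmonicity of both functions away from ${\bf x}_0$, and the prescribed singularity $P\sim Q\,E_n$ --- yields $Q\,u({\bf x}_0)$ as $\varepsilon\to 0$. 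This is essentially Richardson's original argument.

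On the bookkeeping you flagged: read $v_n|_{\Gamma_t}=-\nabla P$ in (\ref{LG}) as the scalar relation $v_n=-\partial P/\partial\nu$ with $\nu$ the outward unit normal of $\Omega_t$. In the excised-ball step, the outward normal of $\Omega_t\setminus B_\varepsilon$ on $\partial B_\varepsilon$ points toward ${\bf x}_0$, so $\partial P/\partial\nu \sim Q\,c_n(n-2)\,\varepsilon^{1-n}$ there; integrating over $\partial B_\varepsilon$ (surface area $\omega_n\varepsilon^{n-1}$) and using $c_n=\frac{1}{(n-2)\omega_n}$ gives exactly $Q\,u({\bf x}_0)$, with no stray constant. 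The term $\int_{\partial B_\varepsilon}P\,\frac{\partial u}{\partial\nu}\,dS$ is $O(\varepsilon)$ and drops out, and the remaining signs combine to give $+Q\,u({\bf x}_0)$ as stated. The only hypothesis needed on $u$, as you note, is harmonicity in a neighborhood of $\overline{\Omega_t}$, together with enough boundary regularity of the family $\Omega_t$ to justify the transport formula; both hold for the explicit solutions constructed in the paper.
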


Given the many equivalent definitions of quadrature domains a few of which were mentioned in the introduction, 
it is not surprising that there are alternative statements of Richardson's Theorem.  
For instance, applying the Theorem to a basis of harmonic polynomials expanded about the point $x_0$,
we obtain that all harmonic moments are constant except one (infinitely-many conservation laws).
One may ask if there is a moment-generating function for the harmonic moments.
The answer is that it is precisely the exterior gravitational potential of $\Omega_t$,
which according to Richardson's Theorem evolves by inheriting the effect due to a single additional point-mass at $x_0$ growing linearly in strength.

Yet another interpretation is relevant to the approach taken in the above sections.
Namely, all singularities of the Schwarz potential are stationary except one positioned at $x_0$ which does not move but simply grows in strength.
This is a consequence of Richardson's Theorem combined with Theorem \ref{KS}.
It can be seen more directly by establishing the following formula, shown in \cite{Lund11} from elementary calculations, relating the Schwarz potential $w$ to the pressure $P$.
\begin{equation}\label{DS}
\frac{\partial}{\partial t}w({\bf x},t) =-nP({\bf x},t),
\end{equation}
where $n$ is the spatial dimension.

In the statement of the problem (\ref{LG}), if we allow $P$ to have more exotic singularities, 
then the quadrature domains constructed in \cite{Karp92} provide exact solutions in $\RR^4$ with a dipole flow superimposed on a source of varying rate. 
Some exact solutions without multipole flows were described in \cite{Lund11}, 
but they required a combination of sources and sinks.
The examples constructed in Section \ref{sec:QD} provide exact solutions to the Problem (\ref{LG}) as stated (i.e. with a single sink).
Before describing the solution, we review its analogue in the plane, which is the well-known example (due to Polubarinova-Kochina \cite{Kochina91}) that encounters a cardioid.

{\bf Example 1 ($\RR^2$):}  Consider the family of domains $D$ with boundary given by the curves $\{\zeta:\zeta=a z^2+bz,|w|<1 \}$ with $a>0$, $b>2a$ real.  
The Schwarz function is given by $S(\zeta)= -2ab/(a-\sqrt{a^2+4b\zeta})+4b^3/(a-\sqrt{a^2+4b\zeta})^2$ which has a single-valued branch in the interior of the curve 
for appropriate parameter values $a$ and $b$.  
The only singularities of the Schwarz function interior to the curve are a simple pole and a pole of order two at the origin.
Since $S(\zeta) = 2 \partial_\zeta w(x,y,t)$, Equation (\ref{DS}) becomes $\frac{\partial}{\partial t}S(\zeta,t) = -4 \partial_\zeta P(x,y,t)$.
So, in order for a one-parameter family of domains to solve Problem (\ref{LG}), the singularities of $S(\zeta)$ must be time-independent except for one simple pole.
Given an initial domain from this family we can choose a one-parameter slice of domains so that the simple pole increases (resp. decreases) 
while the pole of order two does not change.  
This gives an exact solution to the Laplacian growth problem with injection (resp. suction) taking place at the origin.  
In the case of injection, the domain approaches a circle.  
In the case of suction, the domain develops a cusp in finite time.

{\bf Example 2 ($\RR^4$):} Similarly, 
the the domains constructed in the previous section contain one-parameter families 
that solve the Laplacian growth problem with a sink at the origin.
Since changing $C$ simply scales the domain, the different shapes that can occur in these one-parameter families are determined by the value of $a$.
Near the value $a \approx 0.82217...$, the boundary develops a cusp (apparent in Fig. \ref{fig:R4}).

\begin{figure}[ht]
    \begin{center}
    \includegraphics[scale=.2]{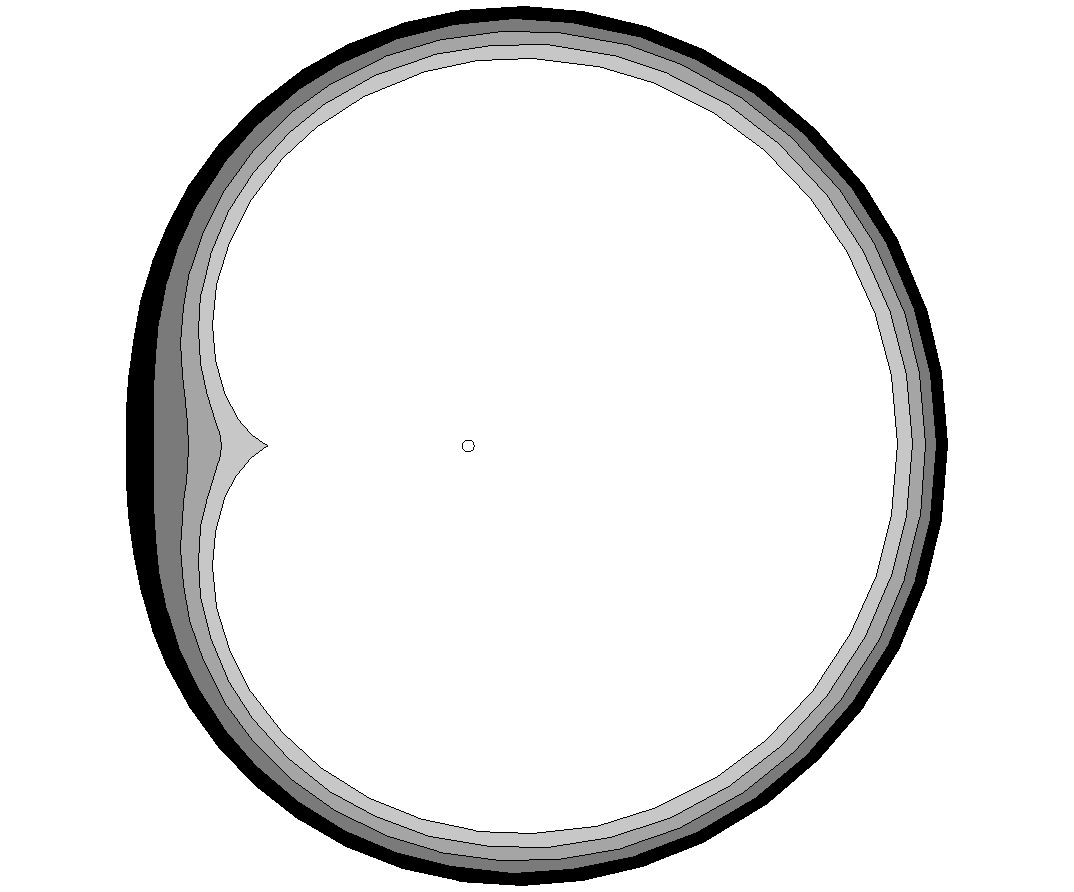}
    \end{center}
    \caption{Two-dimensional profile of a solution of Laplacian growth in $\RR^4$, reminiscent of the cardioid example in the plane.}
    \label{fig:R4}
\end{figure}

\section{Concluding remarks}

{\bf 1.} Before rotation into $\RR^4$, the domain $D_p$ is not a quadrature domain, but it is a quadrature domain \emph{in the wide sense} 
and admits a formula for the integral of any function $F$ analytic in $D_p$
in terms of a distribution supported on the interval $[-\alpha,0]$,
where $-\alpha = f(-a)$ is the image of $-a$ under the conformal map described in the proof of Theorem \ref{thm:nonalg}.

{\bf 2.} It is possible to obtain an explicit (but complicated) formula for the Schwarz potential of an axially symmetric domain in $\RR^n$.
First, take the axially symmetric reduction to (two variables) as discussed just before the proof of Theorem \ref{thm:nonalg}:
$$\Delta U + \frac{(n-2)U_x}{x} = 0,$$
where $U(x,y)$ is the Schwarz potential in the cylindrical variables, 
and we have taken $y$ as the variable along the axis of symmetry in order to correspond with a reference used below.

Now, let $x$ and $y$ each take complex values and make the change of variables to characteristic coordinates $z=x+iy$, $w=x-iy$:
$$U_{z w} + \frac{(n-2)(U_z + U_w)}{2(z+w)} = 0.$$

This is a complexified hyperbolic equation with lower order terms.
The Cauchy problem can be solved using a generalization of d'Alembert's formula involving a so-called Riemann function $A(s,t;z,w).$
The general procedure is discussed in \cite[Ch. 4 and Ch. 5]{Gar}, and the Riemann function for this equation is given by formula (5.36).
$$A(s,t;z,w) = \frac{(s+t)^{\lambda}}{(s+w)^{\lambda/2}(t+z)^{\lambda/2}}F \left(\tfrac{\lambda}{2},\tfrac{\lambda}{2};1;\frac{(s-z)(t-w)}{(s+w)(t+z)}\right),$$
where $F(a,b;c;x)$ denotes the Gauss hypergeometric function, and $\lambda = (n-2)$.
In the case $n = 4$, the hypergeometric function $F(1,1;1;x) = \frac{1}{1-x}$ is rational, but in the case $n=3$ it is transcendental.

Let $S(z)$ be the Schwarz function of the curve $\gamma$ that generates the surface (by rotation).
The complexification of the curve then becomes the graph in $\CC^2$ of $S(z)$.
Using also the fact that the inverse $S^{-1}(w) = \bar{S}(w)$, is just the function obtained by conjugating the coefficients of $S$,
the following formula for $U(z,w)$ is a complex version of the formula (4.73) from \cite{Gar}:

$4U(z,w) = 2 \left[ A ( \bar{S}(w) ,w;z,w ) \bar{S}(w) w + A( z,S(z) ;z,w ) S ( z ) z \right]$

\noindent $+\int_{S(z)}^{w} \tfrac{n-2}{\bar{S}(t)-t} A ( \bar{S}(t) ,t;z,w ) \bar{S}(t) t+A ( \bar{S}(t) ,t;z,w ) \bar{S}(t) - A_t ( \bar{S}(t),t;z,w ) \bar{S}(t) t dt$

\noindent $-\int_{z}^{\bar{S}(w)} \tfrac{n-2}{s-S(s)} A(s,S(s);z,w) S(s) s$

\hspace{.8 in} $+A (s,S(s);z,w) S(s)-A_s(s,S(s);z,w)S(s)sds ,$

\noindent where we have also taken into account the Cauchy data in the above formula:
$$\left\{
\begin{array}{l}
U|_{\gamma} = z w / 2 \\
U_z|_{\gamma}= w/2 \\
U_w|_{\gamma}= z/2
\end{array}\right.
$$

Instead of using the Schwarz function, 
this formula can be written in terms of the conformal map $f$ from the disk by making the change of variables $t = f(\xi)$, $s = f(\eta)$,
and using the relation $S(f(\eta)) = \bar{f}(\frac{1}{\eta})$.
Making this change of variables, one can then consider the following problem whose solution would yield three-dimensional quadrature domains
with prescribed quadrature:

Suppose the singularities of $U(z,\bar{z})$ are prescribed throughout $\Omega$.
Is it possible to then determine $f$ using the formula above?

Conceptually, this is analogous to what was done in Section \ref{sec:QD}, recovering $f(z)$ from the functional equation (\ref{eq:fct1}),
but in practical terms it appears much more difficult.

{\bf 3.} Since $\RR^4$ has proven easier to work with than the more relevant $\RR^3$, 
one wonders if axially symmetric examples in $\RR^3$ can be expected to ``interpolate'', in some sense, between ones in $\RR^2$ and corresponding ones in $\RR^4$,
so that one can formulate conjectures regarding the more physically relevant case of $\RR^3$.
For instance, starting with a circle, a sphere in $\RR^3$, and a hypersphere in $\RR^4$ (all of the same radius), 
consider a dipole flow from the center (in the direction of the  $x$-axis).
The two and four-dimensional examples each develop a cusp on the $x$-axis.
We expect the three-dimensional example does also, 
and it is tempting to conjecture that the position of the cusp is located between the positions of the two and four-dimensional cases.
It may be more suitable to formulate this question generally within the framework of ``generalized axially symmetric potentials'' \cite{Wein53},
i.e. considering non-integer values of the parameter $\lambda$ in the equation $$\Delta u + \frac{\lambda u_x}{x} = 0.$$
Then the question is whether certain aspects of Laplacian growth ``depend monotonically'' on $\lambda$.

{\bf 4.} The procedure used in the proof of Theorem \ref{thm:nonalg} can also be used to construct other domains having quadrature formula 
of the form (\ref{eq:QF3}) with any choice of $m$, i.e. having any number of non-vanishing harmonic moments.
Considering these other cases, one observes that only exceptional cases are algebraic.
It is tempting to conjecture that quadrature domains are almost never algebraic in dimensions greater than two,
and perhaps this would help explain why it has been so difficult to obtain exact solutions to the higher-dimensional Laplacian growth problem.
For Laplacian growth in any number of dimensions, 
if the domain is initially a quadrature domain, then as mentioned in Section \ref{sec:LG} it remains a quadrature domain,
but in the plane it moreover remains an algebraic curve of \emph{bounded} (uniformly throughout time) degree.

{\bf 5.} Instead of prescribing the coefficients in the quadrature formula and trying to find the domain, one can consider a given domain, say with algebraic boundary, 
and try to determine if it admits a quadrature formula.
In this case, it is more reasonable to consider ``quadrature domains in the wide sense'' 
and allow for a measure supported on a continuum instead of a finite sum of point-evaluations.
For an analytic surface, it is always possible to find such a measure supported on a slightly smaller domain.
For ellipsoids, there is a measure supported on a two dimensional set inside called the ``focal ellipse'' \cite{KS89}.

\noindent {\bf Acknowledgement:} We wish to thank Lavi Karp for directing us to an important reference \cite{Sretenskii38}.

{\em Purdue University

West Lafayette, IN 47907

eremenko@math.purdue.edu

elundber@math.purdue.edu}

\end{document}